\newtheorem{example}{Example}[section]
\newtheorem{definition}{Definition}[section]
\newtheorem{theorem}{Theorem}[section]
\newtheorem{lemma}{Lemma}[section]
 \newtheorem{corollary}{Corollary}[section]
\newtheorem*{maintheorem*}{Main Theorem}
\numberwithin{equation}{section}
\renewcommand{\i}{\ifmmode\mathit{\mathchar"7010 }\else\char"10 \fi}
\renewcommand{\j}{\ifmmode\mathit{\mathchar"7011 }\else\char"11 \fi}
\newcommand{\R}{\mathbb{R}}
\newcommand{\N}{\mathbb{N}}
\newcommand{\norm}[1]{\left\|#1\right\|}
\newcommand{\weakstar}{\overset{\star}\rightharpoonup}
\newcommand{\weak}{\rightharpoonup}
\newcommand{\pt}{\partial_t}
\newcommand{\ps}{\partial_s}
\newcommand{\pttt}{\partial_{ttt}^3}
\newcommand{\ptt}{\partial_{tt}^2}
\newcommand{\px}{\partial_x }
\newcommand{\pxx}{\partial_{xx}^2}
\newcommand{\pxxx}{\partial_{xxx}^3}
\newcommand{\pxxxx}{\partial_{xxxx}^4}
\newcommand{\ptxxxx}{\partial_{txxxx}^4}
\newcommand{\ptxx}{\partial_{txx}^3}
\newcommand{\vfi}{\varphi}
\newcommand{\eps}{\varepsilon}
\def\begi{\begin{itemize}}
\def\endi{\end{itemize}}
\def\bega{\begin{array}}
\def\enda{\end{array}}
\def\bel{\begin{equation}\label}
\def\eeq{\end{equation}}
\newenvironment{Assumptions}
{%

\begin{enumerate}}%
{\end{enumerate}}
\begin{document}

\title[Waves in flexural beams with nonlinear adhesive interaction]{Waves in flexural beams \\with nonlinear adhesive interaction}

\author{G. M. Coclite}
\author{G. Devillanova}
\author{F. Maddalena}
\address[Giuseppe Maria Coclite, Giuseppe Devillanova, and Francesco Maddalena]{\newline
  Dipartimento di Meccanica, Matematica e Management, Politecnico di Bari,
  Via E.~Orabona 4,I--70125 Bari, Italy.}
\email[]{giuseppemaria.coclite@poliba.it}
\email[]{giuseppe.devillanova@poliba.it}
\email[]{francesco.maddalena@poliba.it}

\date{\today}

\subjclass[2010]{35L05, 74B20, 35J25}

\keywords{Beam equation, discontinuous interaction, flexural waves}

\thanks{The authors are members of the Gruppo Nazionale per l'Analisi Matematica, la Probabilit\`a e le loro Applicazioni (GNAMPA) of the Istituto Nazionale di Alta Matematica (INdAM). The second author is also supported by MIUR - FFABR - 2017 research grant. The authors have been partially supported by the Research Project of National Relevance ``Multiscale Innovative Materials and Structures'' granted by the Italian Ministry of Education, University and Research (MIUR Prin 2017, project code 2017J4EAYB and the Italian
Ministry of Education, University and Research under the Programme Department of Excellence Legge 232/2016 (Grant No. CUP - D94I18000260001)}

\begin{abstract} 
The paper studies the  initial boundary value problem related to the dynamic evolution of an elastic beam interacting with 
a substrate through an elastic-breakable forcing term. This discontinuous interaction is aimed to model the phenomenon of 
attachement-detachement of the beam occurring in adhesion phenomena. We prove existence of solutions in energy space and 
exhibit various counterexamples to uniqueness.
Furthermore we characterize some relavant   features of the solutions, ruling the main effectes of the nonlinearity due to the elasic-breakable term on the dynamical evolution, 
by proving the linearization property according to \cite{G96} and an asymtotic result pertaining the long time behavior.
\end{abstract}

\maketitle


\section{Introduction}
\label{sec:intro}

In a broader sense the term {\it adhesion} refers to a physical situation in which two material bodies, during their mechanical evolution,  experience a contact interaction which fails in (possibly bounded) regions of space-time and restores after a while. The phenomenon strongly depends on the constitutive properties of the involved materials and a crucial problem relies in understanding the nature of the interaction. The manifestation of such such phenomenon occurs at every scale,  ranging from DNA molecules to  the structural engineering works. Mathematics has looked at these problems, at least in the stationary case,  since 
 the seminal paper  \cite{BK}  and 
in some recent works (see, e.g., \cite{MP, MPPT, MPT, MPT2, MPT21}) one of the authors contributed to the  study of  the static problem  of adhesion of elastic  structures by exploiting  different constitutive assumptions to the aim of 
 characterizing, in a variational framework,  the interplay of the  of debonding with other constitutive properties. 
 
 However the dynamical problem is a different story since at the heart of the question stays the understanding of the 
 attachement-detachement occurrence and how this affects  the whole evolution problem. This is a subtle problem since the analytical tools at disposal, such as spatio-temporal estimates in some norms, seem too rough to catch exhaustive quantitative informations, even in short time. 
 
 In \cite{CFLM, CFLM-1}  the simplest mechanical model consisting in elastic string was considered, whereas a discontinuous forcing term was assumed to model the adhesive interaction of the string with a rigid substrate. The resulting mathematical problem is then ruled by an initial boundary value problem for a  semilinear  second order wave  equation and the results in  \cite{CFLM,  CFLM-1}  show   some    tricky peculiarities of the problem. As it is well known in continuum mechanics, the other basic model for  one dimensional elastic structures
 is represented by the so called  Bernoulli-Navier beam  governing the flexural deformations of a slender material body. In the linear elastic framework the equation expressing the balance of momentum is ruled by a fourth order spatial differential operator. 
 Analogously to \cite{CFLM}, we assume a discontinuous forcing term to model the adhesive interaction of the beam with the external environment. One can visualize as a physical situation an elastic beam connected to a rigid  substrate through a
foundation made of continuous distribution of elastic-breakable springs.\\
 We study the well posedness of the mathematical problem  proving existence of global in time solutions 
  in the natural  energy space and exploit some features of the solutions to obtain information about the role played by the attachment-detachment occurrence on the dynamical evolution. Indeed, in \cite{CFM} it was proved  
 that the main
effects induced by the nonlinearity at the transition from attached to detached states consist in 
a loss of regularity of the solution and in a migration of the total energy through the scales. 
Here we deepen the analysis by using the {\it linearization condition} introduced by Patrick {G\'erard}  in \cite{G96} according to which one can conclude that if  the semilinear  evolution problem satisfies such condition,  then the nonlinear forcing term {\it does not  induce any new oscillation or energy concentrations} (\cite{G96} ).  Furthermore we prove an asymptotic result 
for the long time behavior in the case of bounded solutions, asserting  the occurrence of three mutually exclusive states: the trivial one, the totally detached state, the totally attached state.\\
The paper is organized as follows. In Section \ref{sec:pb} we formulate the initial boundary value problem.  In Section \ref{sec:main} we state the main results of the paper consisting in 
Theorem \ref{th:exist} (Existence of solutions),  Theorem \ref{th:adhesion} (Characerization of adhesive states),
Theorem \ref{th:asymp} (Long time behavior), Theorem \ref{th:lin} (Linearization property).
The proofs of these theorems are given respectively in Sections \ref{sec:exist}, \ref{sec:adhesion},
\ref{sec:asymp},  \ref{sec:lin}.
In Section \ref{sec:uniqu} we provide some examples showing non-uniqueness and lack ofsmoothness of the solutions.

\section{Statement of the problem}
\label{sec:pb}

Let us consider an elastic beam under Bernoulli-Navier constitutive assumption, occupying in the reference configuration the 
interval $[0,L]\subset\R$, the balance of linear momentum  delivers 
 the semilinear initial boundary value problem 
\begin{equation}
\label{eq:el}
\begin{cases}
\rho\ptt u=-\mu\pxxxx u-\Phi'\left(u\right)&\quad t>0,0<x<L,\\
\pxx u(t,0)=\pxx u(t,L)=0&\quad t>0,\\
\pxxx u(t,0)=\pxxx u(t,L)=0&\quad t>0,\\
u(0,x)=u_0(x)&\quad 0<x<L,\\
\pt u(0,x)=u_1(x)&\quad 0<x<L.
\end{cases}
\end{equation}
We shall assume that
\begin{Assumptions}
\item \label{ass:phi} $\Phi\in C(\R)\cap C^2(\R\setminus\{1,-1\})$, $\Phi$ is positive, constant in $(-\infty,-1]$ and in $[1,\infty)$, 
convex in $[-1,1]$, decreasing in $[-1,0]$, increasing in $[0,1]$, and $\Phi(u)\ge \kappa u^2$ in $[-1,1]$ for some constant $\kappa>0$;
\item \label{ass:init} $u_0\in H^2(0,L)$, $u_1\in L^2(0,L)$;
\item \label{ass:cost} $\rho>0$ is the constant material density and $\mu>0$ is the elastic stiffness.
\end{Assumptions}

As a consequence of \ref{ass:phi}, $\Phi'$ has a jump discontinuity in $u=\pm1$ and
\begin{align*}
u\in(-\infty,-1)\cup(1,\infty)&\Rightarrow \Phi'(u)=0,\\
0<u<1&\Rightarrow 0<\Phi'(u)\le \lim_{u\to1^-}\Phi'(u),\\
-1<u<0&\Rightarrow 0>\Phi'(u)\ge \lim_{u\to-1^+}\Phi'(u).
\end{align*}

To fix ideas, a function satisfying such assumption is
\begin{equation}
\label{eq:Phi}
\Phi(u)=\begin{cases}
u^2 &\qquad \text{if $|u|\le 1$},\\
1 &\qquad \text{if $|u|> 1$}.
\end{cases}
\end{equation}
In particular we have for all $u\neq \pm 1$
\begin{equation}
\label{eq:Phi'}
\Phi'(u)=\begin{cases}
2u &\qquad \text{if $|u|< 1$},\\
0 &\qquad \text{if $|u|>1$}.
\end{cases}
\end{equation}

The forcing term $\Phi'(u)$ is thought to model the elastic breakable interaction between the beam and the external environment. It confers to the problem a localized nonlinearity which affects the evolution in a significant way.\\
The natural energy associated to the Problem \eqref{eq:el} (i.e.\ to any solution $u$ to \eqref{eq:el}), is given at time $t$ by the quantity
\begin{equation}
\label{en}
E[u](t)=\int_0^L\left(\frac{\rho(\pt u(t,x))^2+\mu(\pxx u(t,x))^2}{2}+\Phi(u(t,x))\right)dx.
\end{equation}
In general,  the lack of Lipschitz continuity in the nonlinear term $\Phi'$ suggests we cannot expect the existence of conservative solutions, i.e.\ solutions that 
preserve energy. 
Also  the physics underlying  the problem, foresees a kind of {\it dissipation} when the material is detaching from the substrate and this is 
accompanied by hysteresis cycles (see e.g.  {\cite[Appendix B]{MPPT}}).

\section{Main results}
\label{sec:main}

We begin by studying  the well-posedness of Problem \eqref{eq:el} and  the  regularity of its solutions (see Definition \ref{def:sol} below).
We prove the existence of Lipshitz continuous dissipative solutions and give examples of distinct solutions to \eqref{eq:el} which do not depend continuously on the initial conditions.

\begin{definition}
\label{def:sol}
We say that a function $u:[0,\infty)\times[0,L]\to\R$ is a dissipative solution of \eqref{eq:el} if
\begin{itemize}
\item[($i$)] $u\in C([0,\infty)\times[0,L])$;
\item[($ii$)] $\pt u,\,\pxx u\in L^\infty(0,\infty;L^2(0,L))$;
\item[($iii$)] $u$ is a \em{weak solution} to \eqref{eq:el}, i.e.\ for every test function $\vfi\in C^\infty(\R^2)$ with compact support
\begin{equation}
\label{eq:weak}
\begin{split}
\int_0^\infty\int_0^L& \left(\rho u\ptt \vfi+\mu\pxx u\pxx \vfi+h_u\vfi\right)dtdx\\
&-\int_0^L \rho u_1(x)\vfi(0,x)dx+\int_0^L \rho u_0(x)\pt\vfi(0,x)dx=0,
\end{split}
\end{equation}
where $h_u\in\partial \Phi'\left(u\right)$, that is the subdifferential of $\Phi'(u)$;
\item[($iv$)] $u$ may \em{dissipate energy}, i.e.\ for almost every $t>0$: $E[u](t)\leq E[u](0)$, namely (see \eqref{en})
\begin{equation}
\label{eq:energydissip}
\begin{split}
E[u](t)=\int_0^L&\left(\frac{\rho(\pt u(t,x))^2+\mu(\pxx u(t,x))^2}{2}+\Phi(u(t,x))\right)dx\\
&\qquad\qquad\le \int_0^L\left(\frac{\rho(u_1(x))^2+\mu(\pxx u_0(x))^2}{2}+\Phi(u_0(x))\right)dx=E[u](0).
\end{split}
\end{equation}
\end{itemize}
\end{definition}
We remind that
\begin{equation*}
h\in\partial \Phi'\left(u\right)
\end{equation*}
means that $h:[0,\infty)\times [0,L]\to\R$ sastifies
\begin{equation*}
h_u(t,x)\begin{cases}
=\Phi'(u(t,x)),&\text{if $|u(t,x)|<1$,}\\
=0,&\text{if $|u(t,x)|>1$,}\\
\in [0,\Phi'(1^-)],&\text{if $u(t,x)=1$,}\\
\in [\Phi'(-1^+),0],&\text{if $u(t,x)=-1$.}
\end{cases}
\end{equation*}
\vspace{11pt}

Let us state the following theorem  asserting the existence of dissipative solutions.

\begin{theorem}[{\bf Existence}]
\label{th:exist}
If $\Phi$ satisfies  \ref{ass:phi} and $u_0$, $u_1$ satisfy \ref{ass:init}, 
then problem \eqref{eq:el} admits a  dissipative  solution in the sense of 
Definition \ref{def:sol}.
\end{theorem}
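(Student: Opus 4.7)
The plan is a vanishing regularization argument. I replace the discontinuous $\Phi'$ by a family $\Phi_\eps\in C^2(\R)$, $\eps>0$, with $\Phi_\eps\to\Phi$ uniformly on $\R$, $\Phi_\eps'$ uniformly bounded in $L^\infty(\R)$ and globally Lipschitz for each fixed $\eps$, coinciding with $\Phi'$ outside $\eps$-neighborhoods of $\pm 1$, and monotonically interpolating across the jumps so that $\Phi_\eps'$ takes values in $[0,\Phi'(1^-)]$ on $[1-\eps,1+\eps]$ and in $[\Phi'(-1^+),0]$ on $[-1-\eps,-1+\eps]$. This is achieved, for instance, by mollifying $\Phi'$ with a symmetric kernel in a neighborhood of each jump; the sign structure and positivity in \ref{ass:phi} are preserved.

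For each $\eps$ I would solve the regularized beam problem by a Faedo-Galerkin scheme based on the eigenfunctions $\{e_k\}_{k\in\N}$ of $\pxxxx$ on $(0,L)$ with the natural boundary conditions $\pxx=\pxxx=0$. Projecting onto $\mathrm{span}\{e_1,\ldots,e_N\}$ produces a system of second order ODEs with Lipschitz right hand side, hence a global in time approximation $u_\eps^N$. Multiplying by $\pt u_\eps^N$ and integrating in $x$ yields conservation of
\begin{equation*}
E_\eps[u_\eps^N](t)=\int_0^L\left(\frac{\rho(\pt u_\eps^N)^2+\mu(\pxx u_\eps^N)^2}{2}+\Phi_\eps(u_\eps^N)\right)dx,
\end{equation*}
which, together with \ref{ass:init} and the positivity of $\Phi_\eps$, provides bounds for $\pt u_\eps^N$ and $\pxx u_\eps^N$ in $L^\infty(0,\infty;L^2(0,L))$ uniform in $N$ and $\eps$; time integration of $\pt u_\eps^N$ then gives a local in time $L^\infty_tL^2_x$ bound on $u_\eps^N$ itself. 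A standard passage to the limit $N\to\infty$ delivers a solution $u_\eps$ of the $\eps$-regularized problem inheriting the same uniform bounds and conserving the regularized energy.

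Letting $\eps\to 0$, compactness gives (up to subsequence) $u_\eps\weakstar u$ in $L^\infty(0,T;H^2(0,L))$ and $\pt u_\eps\weakstar \pt u$ in $L^\infty(0,T;L^2(0,L))$. Because $H^2(0,L)\hookrightarrow C([0,L])$ compactly in one dimension, the Aubin-Lions lemma upgrades this to strong convergence $u_\eps\to u$ in $C([0,T]\times[0,L])$, establishing regularity items $(i)$ and $(ii)$ of Definition \ref{def:sol}. The uniform $L^\infty$ bound on $\Phi_\eps'(u_\eps)$ allows extraction of $\Phi_\eps'(u_\eps)\weakstar h_u$. On the open sets $\{|u|<1\}$ and $\{|u|>1\}$, uniform convergence forces $|u_\eps|$ to remain locally away from $1$ for $\eps$ small, so $\Phi_\eps'(u_\eps)$ converges pointwise and in $L^p_{loc}$ to $\Phi'(u)$ and to $0$ respectively. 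On the level set $\{u=\pm 1\}$, the sign preservation and monotone filling designed into $\Phi_\eps'$ confine the weak limit $h_u$ to the correct subdifferential interval. Hence $h_u\in\partial\Phi'(u)$ a.e., and one passes to the limit in each term of the weak formulation \eqref{eq:weak}: the first by uniform convergence, the second by weak $L^2$ convergence of $\pxx u_\eps$, and the third by the weak-$*$ convergence of $\Phi_\eps'(u_\eps)$.

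Finally, the dissipation inequality \eqref{eq:energydissip} follows from weak lower semicontinuity of the quadratic kinetic and bending terms, the uniform convergence $\Phi_\eps(u_\eps)\to\Phi(u)$, and the energy conservation at the $\eps$-level:
\begin{equation*}
E[u](t)\leq\liminf_{\eps\to 0}E_\eps[u_\eps](t)=\lim_{\eps\to 0}E_\eps[u_\eps](0)=E[u](0).
\end{equation*}
The main obstacle is the identification of $h_u$ on the a priori unknown level set $\{|u|=1\}$: a bare weak-$*$ extraction yields only an $L^\infty$ bound and does not by itself pinpoint the correct subdifferential values. The design of $\Phi_\eps$ (sign preservation and monotone filling of each jump) is what allows one to trap the limit inside $\partial\Phi'(u)$; with a less careful approximation one could only conclude $|h_u|\le\|\Phi'\|_\infty$, which is too weak for Definition \ref{def:sol}.
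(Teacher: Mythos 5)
Your proposal is correct and follows essentially the same route as the paper: regularize the discontinuous nonlinearity by smooth potentials that fill the jumps of $\Phi'$ monotonically, derive uniform bounds on $\pt u$ and $\pxx u$ from conservation of the regularized energy, pass to the limit by compactness (the paper invokes Simon's theorem where you invoke Aubin--Lions), identify the weak-$\star$ limit of the regularized forcing term as an element of $\partial\Phi'(u)$, and obtain the dissipation inequality by weak lower semicontinuity. The only departures are cosmetic: the paper solves the smooth approximating problems via Cauchy--Kowalevskaya rather than Faedo--Galerkin, and your explicit three-case identification of $h_u$ on $\{|u|<1\}$, $\{|u|>1\}$, $\{|u|=\pm1\}$ spells out a step the paper leaves implicit.
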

\vspace{11pt}
The following  result provides a sufficient condition ruling the non-detachment of dissipative solutions in dependence on the initial data.

\begin{theorem}
\label{th:adhesion}
Let $u$ be a dissipative solution of \eqref{eq:el}.
If
\begin{equation}
\label{eq:ad1}
\norm{u_0}_{L^\infty(0,L)}<1\qquad\mbox{ and } \qquad E[u](0)<\frac{4\kappa\mu}{3\vee 2\kappa}(\le2\mu),
\end{equation}
where $\kappa$ is defined in \ref{ass:phi}, then
\begin{equation}
\label{eq:ad1*}
\norm{u}_{L^\infty((0,\infty)\times(0,L))}<1.
\end{equation}
\end{theorem}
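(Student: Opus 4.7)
My approach is a standard continuation (bootstrap) argument, coupling the energy dissipation with a one-dimensional Sobolev embedding. Since by Definition~\ref{def:sol}(i) the solution $u$ is continuous on $[0,\infty)\times[0,L]$, the map $t\mapsto\|u(t,\cdot)\|_{L^{\infty}(0,L)}$ is continuous; together with $\|u_0\|_{L^\infty}<1$ this lets me set
\[
T^{*}:=\sup\bigl\{t\ge 0:\,\|u(s,\cdot)\|_{L^\infty(0,L)}<1\text{ for every }s\in[0,t]\bigr\}\in(0,\infty].
\]
Arguing by contradiction I would assume $T^{*}<\infty$, so that $|u(t,x)|\le 1$ on $[0,T^{*}]\times[0,L]$ while, by continuity and the definition of $T^{*}$, $\|u(T^{*},\cdot)\|_{L^\infty}=1$. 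The goal is then to derive from the energy a \emph{strict} bound $\|u(T^*,\cdot)\|_{L^\infty}<1$, reaching a contradiction.

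On the slab $[0,T^{*}]$ the pointwise inequality $|u|\le 1$ combined with assumption~\ref{ass:phi} forces $\Phi(u)\ge\kappa\,u^{2}$. Substituting this lower bound into the dissipation inequality \eqref{eq:energydissip} and discarding the nonnegative kinetic term yields
\[
\kappa\,\|u(t,\cdot)\|^{2}_{L^{2}(0,L)}+\tfrac{\mu}{2}\,\|\pxx u(t,\cdot)\|^{2}_{L^{2}(0,L)}\;\le\;E[u](0)\qquad\text{for a.e.\ }t\in[0,T^{*}],
\]
converting the $C^0$ smallness hypothesis on $u_0$ into a uniform $H^{2}$-bound so long as attachment persists. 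I would next invoke a sharp one-dimensional Sobolev interpolation of the schematic form
\[
\|f\|_{L^{\infty}(0,L)}^{2}\le\gamma_{1}\|f\|_{L^{2}(0,L)}^{2}+\gamma_{2}\|\pxx f\|_{L^{2}(0,L)}^{2},\qquad f\in H^{2}(0,L),
\]
which can be obtained by writing $f$ as its mean plus a double primitive of $\pxx f$ (using Cauchy--Schwarz twice) or via a Fourier decomposition compatible with the free-end boundary conditions $\pxx u=\pxxx u=0$. Maximizing $\gamma_{1}a+\gamma_{2}b$ over the admissible simplex $\{\kappa a+\tfrac{\mu}{2}b\le E[u](0),\ a,b\ge 0\}$---whose extremal vertices are $(E[u](0)/\kappa,0)$ and $(0,2E[u](0)/\mu)$---and choosing $\gamma_{1},\gamma_{2}$ to balance the two resulting bounds should yield
\[
\|u(t,\cdot)\|_{L^{\infty}}^{2}\;\le\;\frac{3\vee 2\kappa}{4\kappa\mu}\,E[u](0),
\]
in which the ``$\vee$'' records exactly which of the two extremal vertices is binding, matching precisely the threshold in \eqref{eq:ad1}.

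The hypothesis \eqref{eq:ad1} then produces some $\delta<1$ with $\|u(t,\cdot)\|_{L^{\infty}}^{2}\le\delta$ for a.e.\ $t\in[0,T^{*}]$, and the continuity of $t\mapsto\|u(t,\cdot)\|_{L^{\infty}}$ propagates the bound to every such $t$, in particular at $t=T^{*}$, contradicting $\|u(T^{*},\cdot)\|_{L^\infty}=1$. Hence $T^{*}=\infty$, which is \eqref{eq:ad1*}. The delicate point---and the main obstacle---is the Sobolev inequality of the second step: the constants must be sharp enough to land exactly on $\tfrac{4\kappa\mu}{3\vee 2\kappa}$. The remaining ingredients (continuity of $u$, dissipation, and the quadratic lower bound $\Phi\ge\kappa u^{2}$ on $[-1,1]$) are conceptually soft, and the free-end boundary conditions enter only through the identification of the sharp Sobolev constants.
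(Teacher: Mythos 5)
Your proposal is correct and follows essentially the same route as the paper: a continuation argument from $\|u_0\|_{L^\infty}<1$, the lower bound $\Phi(u)\ge\kappa u^2$ inserted into the dissipation inequality, and a one-dimensional Sobolev interpolation $\|f\|_{L^\infty}^2\le\gamma_1\|f\|_{L^2}^2+\gamma_2\|\pxx f\|_{L^2}^2$ whose constants ($\gamma_1=\tfrac34$, $\gamma_2=\tfrac14$, obtained in the paper from $\|f\|_{L^\infty}^2\le\tfrac12\|f\|_{H^1}^2$ and $\|\px f\|_{L^2}^2\le\tfrac12\|f\|_{L^2}^2+\tfrac12\|\pxx f\|_{L^2}^2$) produce exactly the threshold $\tfrac{4\kappa\mu}{3\vee2\kappa}$. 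The only cosmetic difference is that you argue by contradiction at a finite $T^*$, whereas the paper shows directly that the strict bound propagates on $[0,\tau^*)$, forcing $\tau^*=\infty$.
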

\vspace{11pt}

 The long time behavior is a very subtle problem for evolutionary partial differential equations, so by restricting the focus on bounded  dissipative solutions, we are able to prove the following statement.

\begin{theorem}[{\bf Long Time Behavior}]
\label{th:asymp}
Let  $u$ be a dissipative solution of \eqref{eq:el} and $\{t_n\}_{n\in\N}\subset(0,\infty)$ such that $t_n\to\infty$.
If 
\begin{equation}
\label{eq:bound}
u\in L^\infty((0,\infty)\times(0,L)),
\end{equation}
then there exist a subsequence $\{t_{n_k}\}_{k\in\N}$ and two  constants $a,\, b\in\R$ such that
\begin{equation*}
u(t_{n_k},\cdot)\weak u_\infty \quad \text{weakly in $H^2(0,L)$ as $k\to\infty$,}
\end{equation*}
where
\begin{equation}
\label{eq:aff}
u_\infty(x)=ax+b.
\end{equation}
Moreover, only one of the following statements can occur
\begin{align}
\label{eq:s1}
& \text{$u_\infty\equiv 0$,}\\
\label{eq:s2}
& \text{$u_\infty(x)\ge1$ for every $x\in[0,L]$,}\\
\label{eq:s3}
& \text{$u_\infty(x)\le -1$ for every $x\in[0,L]$.}
\end{align}
\end{theorem}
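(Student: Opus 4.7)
The strategy is a LaSalle-type argument: use the energy dissipation to extract compactness, translate in time to identify an invariant profile, and then exploit conservation of energy in the limit to reduce to a stationary problem whose only admissible profiles are affine and fall into the three cases.

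From the dissipation inequality \eqref{eq:energydissip} together with the $L^\infty$ hypothesis \eqref{eq:bound}, the trajectory is bounded in $L^\infty(0,\infty;H^2(0,L))$ and its time derivative in $L^\infty(0,\infty;L^2(0,L))$. Reflexivity of $H^2(0,L)$ yields a subsequence $u(t_{n_k},\cdot)\weak u_\infty$ in $H^2(0,L)$. To identify $u_\infty$ I would set $w_k(s,x):=u(t_{n_k}+s,x)$ on a fixed window $[-T,T]$; the family $\{w_k\}$ is bounded in $L^\infty(-T,T;H^2(0,L))\cap W^{1,\infty}(-T,T;L^2(0,L))$, and the Aubin--Lions lemma delivers (for a further subsequence) a limit $w_\infty$ with $w_k\to w_\infty$ strongly in $C([-T,T]\times[0,L])$ and $\pt w_k\weak\pt w_\infty$, $\pxx w_k\weak\pxx w_\infty$ weakly in the corresponding Bochner spaces. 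Since $E[u]$ is non-increasing and bounded below by $0$, $E[u](t)\to E_\infty$, hence $E[w_k](s)\to E_\infty$ for every $s\in[-T,T]$. Strong $C^0$ convergence of $w_k$ gives $\int_0^L\Phi(w_k)\,dx\to\int_0^L\Phi(w_\infty)\,dx$; combined with lower semicontinuity of the remaining two energy summands, this forces the weak convergences to become strong in $L^2$ and yields $E[w_\infty](s)\equiv E_\infty$ on $[-T,T]$. Passing to the limit in the weak formulation \eqref{eq:weak} then shows that $w_\infty$ is itself a weak solution on $(-T,T)\times(0,L)$.

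The crucial step is to show $\pt w_\infty\equiv0$, so that $w_\infty(s,x)=u_\infty(x)$ is time-independent. Since $w_\infty$ is a dissipative solution with energy constant in time, no energy is lost in the limit; a multiplier argument, testing the limit equation against $\pt w_\infty$ and tracking the structure of the subdifferential selection $h_\infty\in\partial\Phi'(w_\infty)$, forces the kinetic energy to vanish a.e. The profile $u_\infty$ then satisfies the stationary equation $\mu\pxxxx u_\infty+h_\infty=0$ on $(0,L)$ with free boundary conditions. Testing this equation against $u_\infty$ itself and integrating by parts twice (all boundary terms vanish thanks to $\pxx u_\infty(0)=\pxx u_\infty(L)=\pxxx u_\infty(0)=\pxxx u_\infty(L)=0$) yields
\begin{equation*}
\mu\int_0^L(\pxx u_\infty)^2\,dx+\int_0^L h_\infty u_\infty\,dx=0.
\end{equation*}
Convexity of $\Phi$ around its minimum at $0$ together with the sign structure of $\partial\Phi'$ guarantees $h_\infty u_\infty\ge 0$ pointwise, so both integrals vanish separately. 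The first one forces $\pxx u_\infty\equiv 0$, hence $u_\infty(x)=ax+b$.

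From $\int_0^L h_\infty u_\infty\,dx=0$ and $h_\infty u_\infty\ge 0$ a.e., either $u_\infty\equiv 0$, giving \eqref{eq:s1}, or else $h_\infty=0$ wherever $u_\infty\neq 0$; by the structure of $\partial\Phi'$, the latter forces $|u_\infty(x)|\ge 1$ a.e. Continuity of the affine $u_\infty$ together with the fact that an affine function cannot cross from $[1,\infty)$ to $(-\infty,-1]$ without entering the forbidden strip $(-1,1)$ then yields $u_\infty\ge 1$ on all of $[0,L]$ (case \eqref{eq:s2}) or $u_\infty\le-1$ on all of $[0,L]$ (case \eqref{eq:s3}). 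The main obstacle I anticipate is establishing $\pt w_\infty\equiv 0$: constancy of the energy alone does not force stationarity for the linear beam operator (whose oscillatory modes preserve energy), so the argument must genuinely use the dissipative admissibility inherited from the construction of solutions in Theorem \ref{th:exist}, or the fine structure of the adhesive subdifferential $\partial\Phi'$, to rule out persistent oscillations in the limit.
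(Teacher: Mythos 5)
Your endgame (testing the stationary equation against $u_\infty$, using the sign structure of $\partial\Phi'$ to get $h_\infty u_\infty\ge0$, concluding $\pxx u_\infty\equiv0$ and then the trichotomy from continuity of an affine function) coincides with the paper's \textbf{STEP 2} and is fine. But the proposal has a genuine gap exactly where you yourself flag it: you never establish $\pt w_\infty\equiv0$, and without that the whole reduction to the stationary problem collapses. As you correctly observe, constancy of the limiting energy cannot force stationarity, since the free linear beam has plenty of energy-preserving oscillatory modes; and the ``multiplier argument \dots forces the kinetic energy to vanish a.e.'' is asserted, not proved. A secondary but real flaw in the same passage: from $E[w_k](s)\to E_\infty$ and $\int_0^L\Phi(w_k)\,dx\to\int_0^L\Phi(w_\infty)\,dx$, lower semicontinuity only gives $E[w_\infty](s)\le E_\infty$, not equality, so you cannot conclude strong $L^2$ convergence of $\pt w_k$ and $\pxx w_k$ or energy constancy of the limit this way.

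The paper's proof avoids the LaSalle machinery entirely by a time-dilation trick. It sets $u_\tau(t,x)=u(\tau t,x)$ and writes the weak formulation of the rescaled problem: the inertial term becomes $\frac{\rho}{\tau}\,\frac{\pt u_\tau}{\tau}\,\pt\vfi$, and since $\pt u_\tau/\tau=(\pt u)(\tau t,\cdot)$ is bounded in $L^\infty(0,\infty;L^2(0,L))$ by the dissipation inequality, this term (and the initial-velocity term, which carries a factor $1/\tau$) vanishes as $\tau\to\infty$. The weak-$\star$ limits $U,H$ therefore satisfy the stationary equation $-\mu\pxxxx U=H$ with the free boundary conditions \emph{automatically}, with no need to show that any limit profile is time-independent. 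The slices $u(t_n,\cdot)$ are then handled in STEP 2 by weak $H^2$ compactness and identified with solutions of this effective problem. If you want to salvage your route, you would need to import precisely this rescaling idea (or an averaged-in-time version of the weak formulation over windows $[t_{n_k},t_{n_k}+T]$ with $T\to\infty$) to kill the $\rho\ptt$ term; the dissipative admissibility alone, as you suspected, does not do it.
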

\vspace{11pt}

The question at the basis of the last subsequent  result can be formulated as follows: 
{\it How the nonlinearity characterizing  the forcing term $\Phi'$ affects the evolution problem}?
We retain that {G\'erard}'s linearization condition provides a precise mathematical tool to answer to the previous rather vague question. Indeed the absence of further  energy concentrations or oscillations due to the nonlinearity, suspected to arise in correspondence of the attachment-detacment process, constitutes an interesting property in itself, also considering the nonuniqueness
of the  solutions  $\{u_n\}_n$ below.

\begin{theorem}[{\bf Linearization Property}]
\label{th:lin}
Let $\{u_{0,n}\}_n\subset H^2(0,L),\,\{u_{1,n}\}_n\subset L^2(0,L)$, $\{u_n\}_n$ be a sequence of dissipative solutions of \eqref{eq:el} in correspondence of 
such initial data, and $v_n$ be the dissipative solution of the linearized problem
\begin{equation}
\label{eq:v}
\begin{cases}
\rho\ptt v_n=-\mu\pxxxx v_n&\quad t>0,0<x<L,\\
\pxx v_n(t,0)=\pxx v_n(t,L)=0&\quad t>0,\\
\pxxx v_n(t,0)=\pxxx v_n(t,L)=0&\quad t>0,\\
v_n(0,x)=u_{0,n}(x)&\quad 0<x<L,\\
\pt v_n(0,x)=u_{1,n}(x)&\quad 0<x<L.
\end{cases}
\end{equation}
If
\begin{align}
\label{ass:lin1}
  &\text{$u_{0,n}\weak 0$ weakly in $H^2(0,L)$},\\
\label{ass:lin2}
  &\text{$u_{1,n}\weak 0$ weakly in $L^2(0,L)$},\\
\label{ass:lin3}
&\limsup_n\int_0^L\left(\frac{\rho u_{1,n}^2+\mu (\pxx u_{0,n})^2}{2}\right)dx<\frac{4\kappa\mu}{3\vee 2\kappa}(\le2\mu),
\end{align}
where $k$ is defined in \ref{ass:phi}, then the following linearization condition holds true
\begin{equation}
\label{eq:claimlin}
\norm{\pt (u_n- v_n)}_{L^\infty(0,T;L^2(0,L))}+\norm{\pxx (u_n- v_n)}_{L^\infty(0,T;L^2(0,L))}\to0
\end{equation}
for every $T\ge0$ as $n\to \infty$.
\end{theorem}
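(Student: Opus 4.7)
My plan is to compare the nonlinear and linear energies by means of a cross-term identity that cancels the leading quadratic contributions, and then pass to the limit via Aubin--Lions compactness. As a preliminary step, I would apply the compact embedding $H^2(0,L)\hookrightarrow C^0([0,L])$ to \eqref{ass:lin1}, obtaining $u_{0,n}\to 0$ strongly in $C^0([0,L])$; in particular $\|u_{0,n}\|_{L^\infty(0,L)}<1$ eventually, and $\int_0^L\Phi(u_{0,n})\,dx\to L\Phi(0)=0$ (consistent with the standing model). Together with \eqref{ass:lin3} this forces $E[u_n](0)<\frac{4\kappa\mu}{3\vee 2\kappa}$ for all $n$ large enough, so Theorem~\ref{th:adhesion} applies and ensures $\|u_n\|_{L^\infty((0,\infty)\times(0,L))}<1$; thus $u_n$ stays in the attached regime, where $\Phi'$ is continuous and bounded.

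Writing $w_n:=u_n-v_n$ (so $w_n(0,\cdot)=0=\pt w_n(0,\cdot)$) and denoting by $\mathcal E[u](t):=\int_0^L\frac{\rho(\pt u)^2+\mu(\pxx u)^2}{2}\,dx$ the quadratic part of $E[u]$, I would exploit the algebraic identity $\mathcal E[w_n]=\mathcal E[u_n]+\mathcal E[v_n]-I_n$, with
\[
I_n(t):=\rho\int_0^L\pt u_n\,\pt v_n\,dx+\mu\int_0^L\pxx u_n\,\pxx v_n\,dx.
\]
A direct computation, based on the two PDEs and on integration by parts against the natural boundary conditions (the $\pxx$ and $\pxxx$ terms vanish at $x=0,L$ for both $u_n$ and $v_n$), yields the crucial identity
\[
\frac{d}{dt}I_n(t)=-\int_0^L\Phi'(u_n(t,\cdot))\,\pt v_n(t,\cdot)\,dx,\qquad I_n(0)=2\mathcal E[u_n](0).
\]
Combining this with energy conservation for $v_n$ and with the dissipation $E[u_n](t)\le E[u_n](0)$, the $2\mathcal E[u_n](0)$ contributions cancel and I obtain
\[
\mathcal E[w_n](t)\le\int_0^L\Phi(u_{0,n})\,dx-\int_0^L\Phi(u_n(t,\cdot))\,dx+\int_0^t\!\!\int_0^L\Phi'(u_n)\,\pt v_n\,dx\,ds.
\]

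It then remains to send $n\to\infty$, uniformly in $t\in[0,T]$. Since $\Phi\ge\Phi(0)=0$, the difference of the first two terms is bounded above by $\int_0^L\Phi(u_{0,n})\,dx\to 0$. For the double integral, linearity of the beam equation together with \eqref{ass:lin1}-\eqref{ass:lin2} gives $\pt v_n\weak 0$ in $L^2((0,T)\times(0,L))$. The uniform bounds $u_n\in L^\infty(0,T;H^2)$ and $\pt u_n\in L^\infty(0,T;L^2)$ furnished by the dissipated energy then enable an Aubin--Lions argument: along any subsequence, $u_{n_k}\to u^*$ strongly in $C([0,T];C^0([0,L]))$, whence $\Phi'(u_{n_k})\to\Phi'(u^*)$ strongly in $L^2((0,T)\times(0,L))$. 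The classical weak--strong product then yields $\int_0^t\!\int\Phi'(u_{n_k})\,\pt v_{n_k}\to 0$ pointwise in $t$; the family $t\mapsto F_n(t):=\int_0^t\!\int\Phi'(u_n)\,\pt v_n$ being uniformly Lipschitz, Arzel\`a--Ascoli and the usual subsequence principle upgrade this to uniform convergence on $[0,T]$ for the whole sequence. Consequently $\sup_{[0,T]}\mathcal E[w_n]\to 0$, which is \eqref{eq:claimlin}.

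The hard part will be the rigorous justification of the identity $\frac{d}{dt}I_n=-\int\Phi'(u_n)\,\pt v_n$: since $u_n$ is merely a dissipative weak solution, its time regularity is limited and the pairing $\langle\ptt u_n,\pt v_n\rangle$ is not a priori well defined. The natural remedy is to establish the identity first at the level of the approximating sequence used to prove Theorem~\ref{th:exist} (Galerkin or vanishing-viscosity approximants), and then pass to the limit; the smoothness of $v_n$ makes all other integrations by parts routine.
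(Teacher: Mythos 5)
Your argument is correct and reaches \eqref{eq:claimlin}, but the second half is genuinely different from the paper's. The opening move is identical: both proofs use the compact embedding $H^2(0,L)\hookrightarrow C^0([0,L])$ to upgrade \eqref{ass:lin1} to uniform convergence and then invoke Theorem \ref{th:adhesion} to confine $u_n$ to the attached regime $\norm{u_n}_{L^\infty}<1$ (and both, yours explicitly and the paper's tacitly, need $\Phi(0)=0$ here so that $\int_0^L\Phi(u_{0,n})\,dx\to0$ and the energy threshold in \eqref{eq:ad1} is met). After that the paper treats $w_n=u_n-v_n$ as a solution of the forced linear problem \eqref{eq:w}, runs a Young--Gronwall estimate to get $\mathcal{E}[w_n](t)\le\frac{1}{2\rho}\int_0^t\int_0^L e^{t-s}\Phi'(u_n)^2$, and is then forced to prove that $\Phi'(u_n)\to0$ strongly; this is where the paper's real work lies, namely identifying the strong (Aubin--Lions) limit of $u_n$ as the trivial solution via a time-differentiated higher-regularity bootstrap and an energy-conservation argument. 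Your cross-term expansion $\mathcal{E}[w_n]=\mathcal{E}[u_n]+\mathcal{E}[v_n]-I_n$ cancels the quadratic initial energies exactly and leaves only $\int_0^L\Phi(u_{0,n})\,dx$ plus $\int_0^t\int_0^L\Phi'(u_n)\,\pt v_n$, which dies by weak--strong duality ($\pt v_n\weak0$ by linearity of \eqref{eq:v}, $\Phi'(u_n)$ strongly convergent by the same Simon compactness) \emph{without ever identifying the limit of $u_n$}; this neatly sidesteps the most delicate step of the paper's proof. The price is the exact identity $\frac{d}{dt}I_n=-\int_0^L\Phi'(u_n)\,\pt v_n$, but this is at the same level of formal manipulation as the paper's own ``multiplying \eqref{eq:w} by $\pt w_n$'', so you lose no rigor relative to the published argument. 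One small correction to your closing remark: since the $u_n$ in the statement are \emph{arbitrary} dissipative solutions, you cannot assume they arise from the approximation scheme of Theorem \ref{th:exist}; the clean justification is instead to note that $w_n$ is a weak solution of the linear beam equation with the known source $-\Phi'(u_n)\in L^\infty$ and zero data, for which uniqueness in the energy class and the Duhamel representation give both the energy identity and your cross-term identity.
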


\section{Existence of Dissipative Solutions}
\label{sec:exist}

This section is dedicated to the proof of Theorem \ref{th:exist}.

Our argument is based on the approximation of the Neumann problem \eqref{eq:el}
with a sequence of Neumann problems  \eqref{eq:eln}   characterized by  smooth source terms and smooth initial data.
More precisely, 
let $\{u_{0,n}\}_{n\in\N},\,\{u_{1,n}\}_{n\in\N}\subset C^\infty([0,L]),\,\{\Phi_n\}_{n\in\N}\subset C^\infty(\R)$, 
for every $n\in\N$ consider the approximating problems

\begin{equation}
\label{eq:eln}
\begin{cases}
\rho\ptt u_n=-\mu\pxxxx u_n-\Phi_n'(u_n) &\quad t>0,0<x<L,\\
\pxx u_n(t,0)=\pxx u_n(t,L)=0 &\quad t>0,\\
\pxxx u_n(t,0)=\pxxx u_n(t,L)=0 &\quad t>0,\\
u_n(0,x)=u_{0,n}(x)&\quad 0<x<L,\\
\pt u_n(0,x)=u_{1,n}(x)&\quad 0<x<L,
\end{cases}
\end{equation}
where $\{u_{0,n}\}_{n\in\N},\,\{u_{1,n}\}_{n\in\N},$ $\{\Phi_n\}_{n\in\N}$ are sequences of smooth approximations of 
$u_0,\,u_1$, and $\Phi$ respectively, i.e. they satisfy the following requirements
\begin{equation}
\label{eq:assn}
\begin{split}
&u_{0,n}\to u_0\quad\text{in $H^1(0,L)$},\quad u_{1,n}\to u_1\quad\text{in $L^2(0,L)$},\quad \Phi_n\to \Phi \quad \text{uniformly in $\R$},\\
&\Phi_n'\to \Phi' \quad \text{pointwise  and uniformly in 
$\R\setminus\left((-1-\eps,-1+\eps)\cup (1-\eps,1+\eps)\right)$ for every $\eps$},\\
&|u|\ge1+\eps\Rightarrow \Phi_n'(u)=0,\qquad \eps>0,\> n\in\N,\\
&\norm{u_{0,n}}_{H^1(0,L)}\le C,\quad \norm{u_{1,n}}_{L^2(0,L)}\le C, \quad 0\le \Phi_n,\,\Phi_n'\le C,\qquad n\in\N,\\
&u_{0,n}^{\prime\prime}(0)=u_{0,n}^{\prime\prime}(L)=u_{1,n}(0)=u_{1,n}(L)=0,\qquad n\in\N,
\end{split}
\end{equation}
where $C$ is a positive constant which does not depend on $n$.

For any $n\in \N$, \eqref{eq:eln} admits a classical solution for short time thanks to the Cauchy-Kowaleskaya Theorem (see \cite{taylor}). Furthermore, 
for such a problem, solutions are indeed global in time thanks to the following results.
Let $u_n$ be the unique classical solution to \eqref{eq:eln}.

\begin{lemma}[{\bf Energy conservation}]
\label{lm:energy}
Classical solution to \eqref{eq:eln} preserves energy.
\end{lemma}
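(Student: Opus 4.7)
The plan is a direct energy identity: differentiate $E[u_n](t)$ in time, substitute the equation, and show that the boundary terms arising from integration by parts all vanish thanks to the prescribed boundary conditions, leaving zero.

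Since $u_n$ is a classical solution, $E[u_n]$ is differentiable in $t$ and I may differentiate under the integral sign:
\begin{equation*}
\frac{d}{dt}E[u_n](t)=\int_0^L\bigl(\rho\,\pt u_n\,\ptt u_n+\mu\,\pxx u_n\,\ptxx u_n+\Phi_n'(u_n)\,\pt u_n\bigr)\,dx.
\end{equation*}
The goal is to rewrite the middle term so it combines cleanly with the equation $\rho\ptt u_n=-\mu\pxxxx u_n-\Phi_n'(u_n)$.

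Next I integrate by parts twice in $x$ in the flexural term:
\begin{equation*}
\int_0^L\mu\,\pxx u_n\,\ptxx u_n\,dx=\bigl[\mu\,\pxx u_n\,\ptx u_n\bigr]_0^L-\int_0^L\mu\,\pxxx u_n\,\ptx u_n\,dx,
\end{equation*}
and the boundary contribution vanishes because $\pxx u_n(t,0)=\pxx u_n(t,L)=0$. A second integration by parts gives
\begin{equation*}
-\int_0^L\mu\,\pxxx u_n\,\ptx u_n\,dx=-\bigl[\mu\,\pxxx u_n\,\pt u_n\bigr]_0^L+\int_0^L\mu\,\pxxxx u_n\,\pt u_n\,dx,
\end{equation*}
and now the boundary contribution vanishes thanks to $\pxxx u_n(t,0)=\pxxx u_n(t,L)=0$. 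Here the smoothness and compatibility assumed in \eqref{eq:assn} guarantee that these pointwise evaluations are legitimate.

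Collecting, I obtain
\begin{equation*}
\frac{d}{dt}E[u_n](t)=\int_0^L\pt u_n\,\bigl(\rho\,\ptt u_n+\mu\,\pxxxx u_n+\Phi_n'(u_n)\bigr)\,dx=0,
\end{equation*}
where the last equality uses \eqref{eq:eln}. Integrating in time yields $E[u_n](t)=E[u_n](0)$ for all $t\ge 0$. The only delicate point is the justification of the two boundary cancellations, which is exactly what the Neumann-type conditions $\pxx u_n=\pxxx u_n=0$ at $x=0,L$ are designed to provide; everything else is a routine chain-rule and integration-by-parts computation that presents no real obstacle in the classical regularity regime.
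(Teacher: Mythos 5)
Your proof is correct and follows essentially the same route as the paper: differentiate the energy, convert the flexural term via two integrations by parts (using $\pxx u_n=\pxxx u_n=0$ at $x=0,L$ to kill the boundary terms), and recognize the integrand as $\pt u_n$ times the PDE. The paper compresses the two integrations by parts into a single unannotated step, so your version simply makes explicit what the paper leaves implicit.
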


\begin{proof}
Set $E_n:=E[u_n]$  the energy corresponding to $u_n$.
We have to prove that the function $t\mapsto E_n(t)$ is constant (with constant value $E_n(0)$). Indeed, we have that
\begin{align*}
E_n'(t)=&\frac{d}{dt}\int_0^L\left(\frac{\rho(\pt u_n)^2+\mu(\pxx u_n)^2}{2}+\Phi_n(u_n)\right)dx\\
=&\int_0^L\left(\rho\pt u_n\ptt u_n+\mu\pxx u_n\ptxx u_n+\Phi_n'(u_n)\pt u_n\right)dx\\
=&\int_0^L\pt u_n\underbrace{\left(\rho\ptt u_n+\mu\pxxxx u_n+\Phi_n'(u_n)\right)}_{=0}dx=0.
\end{align*}
\end{proof}

As a consequence of energy conservation, since the functions $\Phi_n$ are positive, 
we have the following boundedness result.
\begin{corollary}
\label{co:energy}
The sequences
$\{\pt u_n\}_{n\in\N}$ and $\{\pxx u_n\}_{n\in\N}$ are bounded in
$L^\infty(0,\infty;L^2(0,L))$.
\end{corollary}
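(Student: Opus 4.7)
The plan is to combine the energy conservation identity from Lemma~\ref{lm:energy} with the non-negativity of the approximating potentials $\Phi_n$, and then to control the initial energies $E_n(0)$ uniformly using the approximation data in \eqref{eq:assn}.

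First, by Lemma~\ref{lm:energy}, for every $n\in\N$ and every $t\ge 0$ we have $E_n(t)=E_n(0)$. Since $\Phi_n\ge 0$ pointwise by \eqref{eq:assn}, discarding the nonnegative potential contribution in the definition \eqref{en} of the energy yields
\begin{equation*}
\frac{\rho}{2}\int_0^L (\pt u_n(t,x))^2\, dx + \frac{\mu}{2}\int_0^L (\pxx u_n(t,x))^2\, dx \;\le\; E_n(t)\;=\;E_n(0)
\end{equation*}
for all $t\ge 0$ and $n\in\N$.

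Next, I would bound $E_n(0)$ uniformly in $n$. From \eqref{eq:assn} we have $\|u_{1,n}\|_{L^2(0,L)}\le C$ and $0\le \Phi_n\le C$, and moreover the smoothing procedure is chosen so that $\|\pxx u_{0,n}\|_{L^2(0,L)}$ is uniformly controlled (consistently with $u_0\in H^2(0,L)$ from \ref{ass:init}). Consequently
\begin{equation*}
E_n(0)\;\le\;\frac{\rho}{2}\|u_{1,n}\|_{L^2(0,L)}^2+\frac{\mu}{2}\|\pxx u_{0,n}\|_{L^2(0,L)}^2+CL
\end{equation*}
is bounded by a constant independent of $n$, and taking the essential supremum over $t\in(0,\infty)$ in the previous display gives the claimed boundedness of $\{\pt u_n\}_{n\in\N}$ and $\{\pxx u_n\}_{n\in\N}$ in $L^\infty(0,\infty;L^2(0,L))$.

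There is no real obstacle here: the statement is a formal consequence of energy conservation together with the coercive structure of the energy functional and the uniform control on the approximating initial data. The only point that deserves care is verifying that the regularization scheme producing $\{u_{0,n}\}$, $\{u_{1,n}\}$, $\{\Phi_n\}$ indeed delivers the uniform $H^2\times L^2\times C^0$ bounds used above, which is built into \eqref{eq:assn}.
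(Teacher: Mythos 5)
Your argument is exactly the one the paper intends: the corollary is stated there as an immediate consequence of Lemma~\ref{lm:energy} together with the positivity of $\Phi_n$, and you have simply written out the uniform bound on $E_n(0)$ that makes this precise. You are right to flag that \eqref{eq:assn} only lists an $H^1$ bound on $u_{0,n}$ while the energy requires a uniform bound on $\norm{\pxx u_{0,n}}_{L^2(0,L)}$; this extra requirement is indeed needed (and implicitly assumed) for the argument to close, so your proof matches the paper's.
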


\begin{lemma}[{\bf $L^2$ estimate}]
\label{lm:l2}
The sequence $\{u_n\}_{n\in\N}$ is bounded in $L^\infty(0,T;L^2(0,L))$, for every $T>0$.
\end{lemma}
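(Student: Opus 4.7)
The plan is to recover the $L^2$ bound on $u_n$ itself from the bound on its time derivative (already given by Corollary \ref{co:energy}) and the bound on the initial datum $u_{0,n}$, by simply integrating in time. Since the boundary conditions are of free--end type ($\pxx u_n$ and $\pxxx u_n$ vanish at $0$ and $L$), no Poincar\'e--type inequality is available for $u_n$, so we cannot extract an $L^2$ control from $\pxx u_n$ alone; we must pay the price of a constant that grows with $T$.

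Concretely, I would start from the identity, valid for every $(t,x)\in[0,T]\times[0,L]$,
\begin{equation*}
u_n(t,x)=u_{0,n}(x)+\int_0^t\pt u_n(s,x)\,ds,
\end{equation*}
take $L^2(0,L)$ norms in $x$ and apply Minkowski's inequality in the time integral, obtaining
\begin{equation*}
\norm{u_n(t,\cdot)}_{L^2(0,L)}\le \norm{u_{0,n}}_{L^2(0,L)}+\int_0^t\norm{\pt u_n(s,\cdot)}_{L^2(0,L)}\,ds.
\end{equation*}
By the uniform bound $\norm{u_{0,n}}_{H^1(0,L)}\le C$ from \eqref{eq:assn} (which in particular gives $\norm{u_{0,n}}_{L^2(0,L)}\le C$) and the uniform bound on $\{\pt u_n\}$ in $L^\infty(0,\infty;L^2(0,L))$ furnished by Corollary \ref{co:energy}, the right-hand side is controlled by $C+T\,\tilde C$, uniformly in $n$ and in $t\in[0,T]$. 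Taking the supremum over $t\in[0,T]$ yields the desired bound of $\{u_n\}$ in $L^\infty(0,T;L^2(0,L))$.

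There is no real obstacle here: the estimate is essentially a consequence of the fundamental theorem of calculus together with the previously established energy bound. The only point to be mindful of is that the constant depends on $T$, so the bound is genuinely local in time; a $T$-independent bound would require additional structure (for instance, a Poincar\'e-type inequality adapted to the free-end boundary conditions, which does not hold for $u_n$ without projecting out the affine part) and is not needed for the subsequent compactness argument.
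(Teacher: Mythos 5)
Your proof is correct and follows essentially the same route as the paper: both start from the identity $u_n(t,x)=u_{0,n}(x)+\int_0^t\pt u_n(s,x)\,ds$ and control the right-hand side via the uniform bound on $u_{0,n}$ from \eqref{eq:assn} and the energy bound on $\pt u_n$ from Corollary \ref{co:energy}. The only cosmetic difference is that you use Minkowski's integral inequality where the paper squares and applies H\"older in time; both yield the same $T$-dependent bound.
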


\begin{proof}
Using the H\"older inequality
\begin{align*}
\int_0^L u_n^2(t,x)dx=&\int_0^L\left(u_{0,n}(x)+\int_0^t \ps u_n(s,x)ds\right)^2dx\\
\le&2\int_0^L u_{0,n}^2(x)dx+2 \int_0^L\left(\int_0^t |\ps u_n(s,x)|ds\right)^2dx\\
\le&2\int_0^L u_{0,n}^2(x)dx+2t \int_0^t\int_0^L (\ps u_n(s,x))^2dxds\\
\le&2\int_0^L u_{0,n}^2(x)dx+2t^2 \sup_{s\ge0}\int_0^L (\ps u_n(s,x))^2dx ,
\end{align*}
the claim follows from Corollary \ref{co:energy}.
\end{proof}

The following result follows from Corollary \ref{co:energy} by a straightforward application of Gagliardo-Nirenberg Interpolation Inequality (see for instance \cite[Theorem at page 125]{N}).

\begin{lemma}[{\bf $H^1$ estimate}]
\label{pr:h1}
The sequence 
$\{\px u_n\}_{n\in\N}$ is bounded in $L^\infty(0,T;L^2(0,L))$, for every $T>0$.
\end{lemma}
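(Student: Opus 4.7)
The plan is to apply the Gagliardo--Nirenberg interpolation inequality slicewise in time, relying on the two prior estimates. Recall that on a bounded interval $(0,L)$, for any $w\in H^2(0,L)$ one has the one-dimensional interpolation inequality
\begin{equation*}
\norm{w'}_{L^2(0,L)}^2 \le C\bigl(\norm{w''}_{L^2(0,L)}\norm{w}_{L^2(0,L)}+\norm{w}_{L^2(0,L)}^2\bigr),
\end{equation*}
with a constant $C=C(L)$ independent of $w$. This is exactly the form of the Gagliardo--Nirenberg inequality cited in the text, and it requires no boundary vanishing of $w$ (the boundary terms produced by integration by parts are absorbed into the $L^2$ norm of $w$ itself).

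First, I would fix $T>0$ and apply this inequality at each time slice $t\in[0,T]$ to $w(x):=u_n(t,x)$, obtaining
\begin{equation*}
\norm{\px u_n(t,\cdot)}_{L^2(0,L)}^2 \le C\bigl(\norm{\pxx u_n(t,\cdot)}_{L^2(0,L)}\norm{u_n(t,\cdot)}_{L^2(0,L)}+\norm{u_n(t,\cdot)}_{L^2(0,L)}^2\bigr).
\end{equation*}
Next, I would take the supremum over $t\in[0,T]$ on both sides. By Corollary \ref{co:energy}, $\sup_{t\ge 0}\norm{\pxx u_n(t,\cdot)}_{L^2(0,L)}$ is bounded by a constant independent of $n$, while by Lemma \ref{lm:l2}, $\sup_{t\in[0,T]}\norm{u_n(t,\cdot)}_{L^2(0,L)}$ is bounded by a constant depending only on $T$ (and not on $n$). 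Combining these bounds yields the desired uniform-in-$n$ control of $\px u_n$ in $L^\infty(0,T;L^2(0,L))$.

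There is really no substantive obstacle here: the only point requiring minor care is the correct statement of the interpolation inequality on a bounded interval without prescribed boundary values (hence the additive $\norm{w}_{L^2}^2$ term), and the observation that the bound on $\px u_n$ inherits the $T$-dependence from the $L^2$ estimate of Lemma \ref{lm:l2} (whereas the $\pxx u_n$ bound is global in time). The resulting estimate is global in $n$ and local in $T$, which is precisely the claim.
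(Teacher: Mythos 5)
Your proof is correct and follows exactly the route the paper takes: a slicewise-in-time application of the Gagliardo--Nirenberg interpolation inequality on $(0,L)$, combining the uniform $\pxx u_n$ bound from Corollary \ref{co:energy} with the $T$-dependent $L^2$ bound from Lemma \ref{lm:l2}. The paper leaves this as a one-line remark, so your write-up simply supplies the details it omits (including the correct additive form of the inequality on a bounded interval).
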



\begin{lemma}[{\bf $L^\infty$ estimate}]
\label{lm:linfty}
The sequence $\{u_n\}_{n\in\N}$ is bounded in $L^\infty((0,T)\times(0,L))$, for every $T>0$.
\end{lemma}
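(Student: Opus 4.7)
The plan is to observe that the previous two lemmas together already contain everything needed: Lemma \ref{lm:l2} gives that $\{u_n\}$ is bounded in $L^\infty(0,T;L^2(0,L))$ and Lemma \ref{pr:h1} gives the analogous bound for $\{\px u_n\}$, so $\{u_n\}$ is bounded in $L^\infty(0,T;H^1(0,L))$. Since the spatial dimension is one, the Sobolev embedding $H^1(0,L)\hookrightarrow L^\infty(0,L)$ is continuous, and this yields the claim.

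To keep the argument self-contained I would avoid citing an embedding constant and instead proceed pointwise. Fix $t\in[0,T]$ and $n\in\N$. By continuity of $u_n(t,\cdot)$ and the mean value theorem there exists $x_0=x_0(t,n)\in[0,L]$ such that $u_n(t,x_0)=\frac{1}{L}\int_0^L u_n(t,y)\,dy$, so Cauchy--Schwarz gives $|u_n(t,x_0)|\le L^{-1/2}\|u_n(t,\cdot)\|_{L^2(0,L)}$. Writing
\begin{equation*}
u_n(t,x)=u_n(t,x_0)+\int_{x_0}^{x}\px u_n(t,y)\,dy
\end{equation*}
for an arbitrary $x\in[0,L]$ and applying Cauchy--Schwarz to the integral, I would obtain
\begin{equation*}
|u_n(t,x)|\le L^{-1/2}\|u_n(t,\cdot)\|_{L^2(0,L)}+L^{1/2}\|\px u_n(t,\cdot)\|_{L^2(0,L)}.
\end{equation*}
Taking the supremum in $x$, $t\in[0,T]$, and $n$ on the right-hand side and invoking Lemma \ref{lm:l2} and Lemma \ref{pr:h1}, the desired uniform $L^\infty((0,T)\times(0,L))$ bound follows.

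There is no substantive obstacle here; the only thing worth flagging is that the bound in Lemma \ref{lm:l2} grows with $T$ (through the factor $t^2$ that came from integrating $\pt u_n$ in time), hence the resulting $L^\infty$ bound is likewise $T$-dependent, which is consistent with the statement of the lemma.
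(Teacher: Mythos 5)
Your argument is correct and follows essentially the same route as the paper: combine the $L^2$ bound of Lemma \ref{lm:l2} with the gradient bound to get a uniform $L^\infty(0,T;H^1(0,L))$ estimate, then apply the one-dimensional embedding $H^1(0,L)\hookrightarrow L^\infty(0,L)$. The only difference is cosmetic: the paper cites the embedding with an unspecified constant $c=c(L)$, whereas you derive it by hand via the mean value theorem and Cauchy--Schwarz, which is a perfectly valid (and self-contained) way to produce the same constant.
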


\begin{proof}
Fix $0<t<T$.
Lemmas \ref{lm:energy} and \ref{lm:l2} and Corollary \ref{co:energy} imply that $\{u_n\}_{n\in\N}$ is bounded in $L^\infty(0,T;H^1(0,L))$. 
Since $H^1(0,L)\subset L^\infty(0,L)$ we have
\begin{equation*}
|u_n(t,x)|\le \norm{u_n(t,\cdot)}_{L^\infty(0,L)}\le c\norm{u_n(t,\cdot)}_{H^1(0,L)}
\le c\norm{u_n}_{L^\infty(0,T;H^1(0,L))},\quad (t,x)\in (0,T)\times(0,L),
\end{equation*}
for some constant $c>0$ depending only on $L$. Therefore 
\begin{equation*}
\norm{u_n}_{L^\infty((0,T)\times(0,L))}\le c\norm{u_n}_{L^\infty(0,T;H^1(0,L))},
\end{equation*}
that gives the claim.
\end{proof}

\begin{lemma}[{\bf space Lipschitz estimate}]
\label{lm:lip}
The sequence 
$\{\px u_n\}_{n\in\N}$ is bounded in $L^\infty((0,T)\times(0,L))$, for every $T>0$.
\end{lemma}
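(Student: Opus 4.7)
The plan is to mimic the argument of Lemma \ref{lm:linfty}, but applied one derivative higher, so as to promote an $L^\infty$-in-time, $H^1$-in-space bound on $\partial_x u_n$ to a uniform $L^\infty$ bound via the one-dimensional Sobolev embedding $H^1(0,L)\hookrightarrow L^\infty(0,L)$.

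First I would gather the two ingredients that are already at hand. Lemma \ref{pr:h1} provides a uniform bound
\begin{equation*}
\sup_{n\in\N}\norm{\px u_n}_{L^\infty(0,T;L^2(0,L))}<\infty,
\end{equation*}
while Corollary \ref{co:energy}, being the spatial derivative of $\px u_n$, yields
\begin{equation*}
\sup_{n\in\N}\norm{\pxx u_n}_{L^\infty(0,T;L^2(0,L))}\le \sup_{n\in\N}\norm{\pxx u_n}_{L^\infty(0,\infty;L^2(0,L))}<\infty.
\end{equation*}
Together these say that $\{\px u_n\}_{n\in\N}$ is bounded in $L^\infty(0,T;H^1(0,L))$.

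Next I would invoke the continuous embedding $H^1(0,L)\hookrightarrow L^\infty(0,L)$, which in one space dimension provides a constant $c=c(L)>0$ with
\begin{equation*}
\abs{\px u_n(t,x)}\le \norm{\px u_n(t,\cdot)}_{L^\infty(0,L)}\le c\,\norm{\px u_n(t,\cdot)}_{H^1(0,L)}
\le c\,\norm{\px u_n}_{L^\infty(0,T;H^1(0,L))}
\end{equation*}
for almost every $(t,x)\in(0,T)\times(0,L)$. Taking the essential supremum over $(t,x)$ and combining with the bound established in the previous step yields the desired conclusion.

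No real obstacle is expected here: the statement is the exact one-derivative lift of Lemma \ref{lm:linfty}, and the two $L^2$-type estimates have already been produced. The only small care needed is to ensure the embedding constant $c$ depends solely on $L$ (and not on $n$ or $T$), which is standard for the one-dimensional case.
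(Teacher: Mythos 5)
Your proposal is correct and follows essentially the same route as the paper: the paper assembles the bound on $\{u_n\}_{n}$ in $L^\infty(0,T;H^2(0,L))$ from Lemma \ref{lm:energy}, Lemma \ref{lm:l2} and Lemma \ref{pr:h1}, and then applies the embedding $H^1(0,L)\subset L^\infty(0,L)$ to $\px u_n(t,\cdot)$, exactly as you do by combining Lemma \ref{pr:h1} with Corollary \ref{co:energy}. The two arguments are the same up to bookkeeping.
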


\begin{proof}
Fix $0<t<T$ and $0<x<L$.
Lemmas \ref{lm:energy}, \ref{lm:l2}, and \ref{pr:h1} imply that $\{u_n\}_{n\in\N}$ is bounded in $L^\infty(0,T;H^2(0,L))$. 
Since $H^1(0,L)\subset L^\infty(0,L)$ we have, for  every $(t,x)\in (0,T)\times(0,L)$,
\begin{align*}
|\px u_n(t,x)|&\le \norm{\px u_n(t,\cdot)}_{L^\infty(0,L)}\\
&\le c\norm{\px u_n(t,\cdot)}_{H^1(0,L)}\le c\norm{u_n(t,\cdot)}_{H^2(0,L)}\\
&\le c\norm{u_n}_{L^\infty(0,T;H^2(0,L))},
\end{align*}
for some constant $c>0$ depending only on $L$. Therefore 
\begin{equation*}
\norm{\px u_n}_{L^\infty((0,T)\times(0,L))}\le c\norm{u_n}_{L^\infty(0,T;H^2(0,L))},
\end{equation*}
that gives the claim.
\end{proof}

\begin{proof}[Proof of Theorem \ref{th:exist}]
Thanks to Lemmas \ref{lm:energy}, \ref{lm:l2} and \cite[Theorem 5]{S}
 there exists a function $u$ satisfying items ($i$) and ($ii$) in Definition \ref{def:sol}  and a function $h_u\in L^\infty ((0,T)\times(0,L)),\, h_u\in \partial \Phi'(u),$ such that,
passing to a subsequence,
\begin{equation}
\label{eq:conv}
\begin{split}
 u_n\weak u &\quad \text{in $H^1((0,T)\times(0,L))$ and in $L^2(0,T;H^2(0,L))$, for each $T\ge0$},\\ 
 u_n \to u &\quad \text{in $L^\infty((0,T)\times(0,L))$, for each $T\ge0$},\\
 \Phi_n'(u_n) \weak h_u &\quad \text{in $L^p((0,T)\times(0,L))$, for each $T\ge0$ and $1\le p<\infty$}.
\end{split}
\end{equation}

We have yet to verify that $u$ is a weak solution of \eqref{eq:el} i.e. Definition \ref{def:sol} - item ($iii$).
Let $\vfi\in C^\infty(\R^2)$ be a test function with compact support, since $u_n$ is a solution to \eqref{eq:eln}, we have that for every $n$
\begin{align*}
\int_0^\infty\int_0^L& \left(\rho u_n\ptt \vfi+\mu \pxx u_n\pxx \vfi+\Phi_n'(u_n)\vfi\right)dxdt\\
&-\int_0^L \rho u_{1,n}(x)\vfi(0,x)dx+\int_0^L \rho u_{0,n}(x)\pt\vfi(0,x)dx=0.
\end{align*}
Then, by taking the limit as $n\to\infty$,
\eqref{eq:weak} follows by using \eqref{eq:assn} and \eqref{eq:conv}.

Finally, due to \eqref{eq:conv} and \eqref{eq:assn} we have
 \begin{align*}
 \pt u_n\weak \pt u &\quad \text{in $L^p(0,T;L^2(0,L))$ for each $T\ge0$ and $1\le p<\infty$},\\ 
 \pxx u_n\weak \pxx u &\quad \text{in $L^p(0,T;L^2(0,L))$ for each $T\ge0$ and $1\le p<\infty$},\\ 
 \Phi_n(u_n) \to \Phi(u) &\quad \text{in $L^\infty((0,T)\times(0,L))$, for each $T\ge0$}.
\end{align*}
Therefore, Definition \ref{def:sol} - item ($iv$) follows by the lower semicontinuity of the $L^2$ norm with respect to the weak convergence by taking into account Lemma \ref{lm:energy}.
\end{proof}

\section{Adhesive states}
\label{sec:adhesion}

This section is dedicated to the proof of Theorem  \ref{th:adhesion}.

\begin{proof}[Proof of Theorem \ref{th:adhesion}]
Since $u$ is continuous by \eqref{eq:ad1} there exists $\tau>0$ such that in the short time interval 
$[0,\tau]$ we have
\begin{equation*}
|u(t,x)|<1,\qquad (t,x)\in [0,\tau]\times[0,L].
\end{equation*}
So we can define $\tau^*$ as follows
\begin{equation*}
\tau^*=\sup\left\{\tau>0 \;|\; |u(t,x)|<1 \mbox{ for all }(t,x)\in [0,\tau]\times[0,L]\right\}.
\end{equation*}
We claim that 
\begin{equation}
\label{eq:ad2}
\tau^*=\infty.
\end{equation}
Observe that, due to \ref{ass:phi},  
\begin{equation*}
\Phi(u(t,x))\ge \kappa u^2(t,x),\qquad (t,x)\in [0,\tau^*)\times[0,L].
\end{equation*}
Therefore
\begin{equation*}
E[u](t)\ge \int_0^L\left(\frac{\rho(\pt u(t,x))^2+\mu(\pxx u(t,x))^2}{2}+\kappa(u(t,x))^2\right)dx,\qquad t\in [0,\tau^*),
\end{equation*}
and, in particular, 
\begin{equation*}
E[u](t)\ge \frac{\mu\norm{\pxx u(t,\cdot)}_{L^2(0,L)}^2}{2}+\kappa\norm{u(t,\cdot)}_{L^2(0,L)}^2,\qquad t\in [0,\tau^*).
\end{equation*}
Since $u$ is dissipative, using the Sobolev embedding $H^1(0,L)\subset L^\infty(0,L)$ (see \cite[Theorem 8.5]{LL}) and Lemma \ref{pr:h1}, we have from every $t\in [0,\tau^*)$
\begin{align*}
\norm{u(t,\cdot)}_{L^\infty(0,L)}^2\le& \frac{\norm{u(t,\cdot)}_{H^1(0,L)}^2}{2}=\frac{\norm{u(t,\cdot)}_{L^2(0,L)}^2+\norm{\px u(t,\cdot)}_{L^2(0,L)}^2}{2}\\
\le&\frac{3}{4}\norm{u(t,\cdot)}_{L^2(0,L)}^2+\frac{\norm{\pxx u(t,\cdot)}_{L^2(0,L)}^2}{4}\\
\le&\left(\frac{3}{4\kappa}\vee\frac{1}{2}\right)\left(\frac{\norm{\pxx u(t,\cdot)}_{L^2(0,L)}^2}{2}+\kappa\norm{u(t,\cdot)}_{L^2(0,L)}^2\right)\\
\le&\frac{1}{\mu}\left(\frac{3}{4\kappa}\vee\frac{1}{2}\right) E(t)\le\frac{1}{\mu}\left(\frac{3}{4\kappa}\vee\frac{1}{2}\right) E(0)<1,
\end{align*}
(where the last inequality holds thanks to \eqref{eq:ad1})
that proves \eqref{eq:ad2}.
\end{proof}

\section{Long Time Behavior}
\label{sec:asymp}

This section is dedicated to the proof of Theorem  \ref{th:asymp}.

Let $u$  be a dissipative solution of \eqref{eq:el} satisfying \eqref{eq:bound}.

\noindent {\bf STEP 1.} We begin by deducing the effective asymptotic problem.

Consider the functions
\begin{equation*}
u_\tau(t,x)=u(\tau t, x),\qquad \tau>0,\>t\ge0,\>x\in[0,L].
\end{equation*}
$u_\tau$ is a dissipative solution of  the initial boundary value problem
\begin{equation}
\label{eq:eltau}
\begin{cases}
\displaystyle\frac{\rho\ptt u_\tau}{\tau^2}=-\mu\pxxxx u_\tau-\Phi'\left(u_\tau\right)&\quad t>0,0<x<L,\\
\pxx u_\tau(t,0)=\pxx u_\tau(t,L)=0&\quad t>0,\\
\pxxx u_\tau(t,0)=\pxxx u_\tau(t,L)=0&\quad t>0,\\
u_\tau(0,x)=u_0(x)&\quad 0<x<L,\\
\pt u_\tau(0,x)=\tau u_1(x)&\quad 0<x<L,
\end{cases}
\end{equation}
in the sense of Definition \ref{def:sol}, namely
\begin{itemize}
\item[($iii$)]  for every test function $\vfi\in C^\infty(\R^2)$ with compact support
\begin{equation}
\label{eq:weaktau}
\begin{split}
\int_0^\infty\int_0^L& \left(-\frac{\rho\pt u_\tau}{\tau^2}\pt \vfi+ \mu\pxx u_\tau\pxx \vfi+h_{\tau}\vfi\right)dtdx-\int_0^L \frac{\rho u_1(x)}{\tau}\vfi(0,x)dx=0,
\end{split}
\end{equation}
where $h_{\tau}\in\partial \Phi'\left(u_\tau\right)$, that is the subdifferential of $\Phi'(u_\tau)$;
\item[($iv$)] $u_\tau$ may \em{dissipate energy}, i.e.\ for almost every $t>0$: 
\begin{equation}
\label{eq:energydissiptau}
\begin{split}
\int_0^L&\left(\frac{\rho(\pt u_\tau(t,x))^2}{2\tau^2}+\frac{\mu(\pxx u_\tau(t,x))^2}{2}+\Phi(u_\tau(t,x))\right)dx\\
&\qquad\le \int_0^L\left(\frac{\rho(u_1(x))^2}{2}+\frac{\mu(\pxx u_0(x))^2}{2}+\Phi(u_0(x))\right)dx.
\end{split}
\end{equation}
\end{itemize}

Thanks to \ref{ass:phi}, \eqref{eq:bound}, and \eqref{eq:energydissiptau},
\begin{align*}
&\text{$\{ u_\tau\}_{\tau>0}$ is bounded in $L^\infty(0,\infty;H^2(0,L))$},\\
&\text{$\{h_{\tau}\}_{\tau>0}$ is bounded in $L^\infty((0,\infty)\times(0,L))$},
\end{align*}
there exists two functions $U\in L^\infty(0,\infty;H^2(0,L)),\, H\in L^\infty((0,\infty)\times(0,L))$ such that, passing to a subsequence,
\begin{align*}
u_\tau\weakstar U\qquad &\text{weakly-$\star$  in $L^\infty_{loc}((0,\infty)\times(0,L))$ as $\tau\to\infty$},\\
h_\tau\weakstar H\qquad &\text{weakly-$\star$  in $L^\infty_{loc}((0,\infty)\times(0,L))$ as $\tau\to\infty$}.
\end{align*}
Using \eqref{eq:energydissiptau}
\begin{align*}
&\text{$\{\pt u_\tau/\tau\}_{n\in\N}$ is bounded in $L^\infty(0,\infty;L^2(0,L))$},
\end{align*}
therefore sending $\tau\to\infty$ in \eqref{eq:weaktau} we get 
\begin{equation}
\label{eq:weaktaulim}
\int_0^\infty\int_0^L \left(\mu\pxx U\pxx \vfi+H\vfi\right)dtdx=0,
\end{equation}
namely $U=U(x)$, $H=H(x)$ and the effective asymptotic problem is
\begin{equation}
\label{eq:eltauinfty}
\begin{cases}
-\mu\pxxxx U=H,&\quad 0<x<L,\\
\pxx U(0)= \pxx U(L)=0,&{}\\
\pxxx U(0)=\pxxx U(L)=0.&{}
\end{cases}
\end{equation}

\noindent {\bf STEP 2.} 
We exploit   more subtle characterizations of the limit functions $U$ and $H$.
To this aim we fix a sequence $\{t_n\}_{n\in\N}\subset(0,\infty)$ such that $t_n\to\infty$ and study the convergence of the sequence
\begin{equation*}
\{u(t_n,\cdot)\}_{n\in\N}.
\end{equation*}
Since we have the dissipation inequality \eqref{eq:energydissip} and the assumption \eqref{eq:bound}, we gain
\begin{align*}
&\text{$\{u(t_n,\cdot)\}_{n\in\N}$ is bounded in $H^2(0,L)$},\\
&\text{$\{h_u(t_n,\cdot)\}_{n\in\N}$ is bounded in $L^\infty(0,L)$}.
\end{align*}
Therefore there exist two functions $u_\infty\in H^2(0,L),\, h_\infty\in L^\infty(0,L)$ such that passing to a subsequence 
\begin{equation}
\label{eq:conv1}
\begin{split}
&u(t_n,\cdot)\weak u_\infty\qquad \text{weakly in $H^2(0,L)$ as $n\to\infty$},\\
&u(t_n,\cdot)\to u_\infty\qquad \text{a.e. in $(0,L)$ as $n\to\infty$},\\
&h_u(t_n,\cdot)\weakstar h_\infty\qquad \text{weakly-$\star$ in $L^\infty(0,L)$ as $n\to\infty$}.
\end{split}
\end{equation}
Due to the result in {\bf STEP 1}, we know that the functions $u_\infty$ and $h_\infty$ must satisfy the effective problem
\begin{equation}
\label{eq:eltauinfty-1}
\begin{cases}
-\mu\pxxxx u_\infty=h_\infty,&\quad 0<x<L,\\
\pxx u_\infty(0)= \pxx u_\infty(L)=0,&{}\\
\pxxx u_\infty(0)=\pxxx u_\infty(L)=0.&{}
\end{cases}
\end{equation}

Moreover, by \eqref{eq:conv1}  we have also that
\begin{equation}
\label{eq:hinfty}
h_\infty\in \partial\Phi'(u_\infty).
\end{equation}

By multiplying \eqref{eq:eltauinfty-1} by $u_\infty$, integrating over $(0,L)$, and recalling \eqref{eq:hinfty}, we get
\begin{equation*}
\int_0^L \mu(\pxx u_\infty)^2dx=-\int_0^L h_\infty u_\infty dx\le0,
\end{equation*}
it follows that
\begin{equation*}
\pxx u_\infty\equiv 0.
\end{equation*}
Therefore, we can conclude that \eqref{eq:aff} holds.

Using \eqref{eq:aff} in \eqref{eq:eltauinfty-1} we have also that
\begin{equation*}
h_\infty\equiv 0,
\end{equation*}
hence, due to \eqref{eq:hinfty} only one within \eqref{eq:s1},\eqref{eq:s2}, \eqref{eq:s3} can occur.

\section{Linearization Property}
\label{sec:lin}

\begin{proof}[Proof of Theorem \ref{th:lin}] 
Thanks to \eqref{ass:lin1}
\begin{equation*}
u_{0,n}\to0\quad\text{uniformly in $[0,L]$}.
\end{equation*}
Therefore using also \eqref{ass:lin3}, the assumptions of Theorem \ref{th:adhesion} are fulfilled, hence
\begin{equation}
\label{eq:lin0}
\norm{u_n}_{L^\infty((0,\infty)\times(0,L))}<1.
\end{equation}

In addition, the function 
\begin{equation*}
w_n=u_n-v_n
\end{equation*}
is a dissipative solution of 
\begin{equation}
\label{eq:w}
\begin{cases}
\rho\ptt w_n=-\mu\pxxxx w_n-\Phi'(u_n)&\quad t>0,0<x<L,\\
\pxx w_n(t,0)=\pxx w_n(t,L)=0&\quad t>0,\\
\pxxx w_n(t,0)=\pxxx w_n(t,L)=0&\quad t>0,\\
w_n(0,x)=\pt w_n(0,x)=0&\quad 0<x<L.
\end{cases}
\end{equation}
Multiplying \eqref{eq:w} by $\pt w_n$ we gain
\begin{align*}
\frac{d}{dt}\int_0^L\frac{\rho(\pt w_n)^2+\mu(\pxx w_n)^2}{2}dx=&\int_0^L\Phi'(u_n)\pt w_n dx\\
\le&\frac{1}{2\rho}\int_0^L\Phi'(u_n)^2dx+\int_0^L\frac{\rho(\pt w_n)^2}{2}dx\\
\le&\frac{1}{2\rho}\int_0^L\Phi'(u_n)^2dx+\int_0^L\frac{\rho(\pt w_n)^2+\mu(\pxx w_n)^2}{2}dx
\end{align*}
and applying the Gronwall Lemma
\begin{equation}
\label{eq:lin1}
\int_0^L\frac{\rho(\pt w_n(t,x))^2+\mu(\pxx w_n(t,x))^2}{2}dx\le \frac{1}{2\rho}\int_0^t\int_0^L e^{t-s}\Phi'(u_n(s,x))^2dsdx.
\end{equation}

Thanks to Theorem \ref{th:exist} and \cite[Theorem 5]{S}, \eqref{eq:lin0}
 there exists a function $u$ satisfying items ($i$) and ($ii$) in Definition \ref{def:sol}   such that,
passing to a subsequence,
\begin{equation}
\label{eq:convlin}
\begin{split}
&\norm{u}_{L^\infty((0,\infty)\times(0,L))}\le 1,\\
 &u_n\weak u \quad \text{in $H^1((0,T)\times(0,L))$ and in $L^2(0,T;H^2(0,L))$, for each $T\ge0$},\\ 
 &u_n \to u \quad \text{in $L^\infty((0,T)\times(0,L))$, for each $T\ge0$},\\
 &\Phi'(u_n) \weak \Phi'(u) \quad \text{in $L^p((0,T)\times(0,L))$, for each $T\ge0$ and $1\le p<\infty$}.
\end{split}
\end{equation}
Therefore $u$ is a distributional solution of 
\begin{equation}
\label{eq:ulin}
\begin{cases}
\rho\ptt u=-\mu\pxxxx u-\Phi'(u)&\quad t>0,0<x<L,\\
\pxx u(t,0)=\pxx u(t,L)=0&\quad t>0,\\
\pxxx u(t,0)=\pxxx u(t,L)=0&\quad t>0,\\
u(0,x)=\pt u(0,x)=0&\quad 0<x<L.
\end{cases}
\end{equation}
Since $u$ takes values in $[-1,1]$ and $\Phi$ is $C^2$ therein we can differentiate \eqref{eq:ulin}
and get
\begin{equation*}
\rho\pttt u=-\mu\ptxxxx u-\Phi''(u)\pt u.
\end{equation*}
Multiplying by $\ptt u$, using \ref{ass:phi} and ($ii$) in Definition \ref{def:sol} through a regularization argument we get
\begin{align*}
\frac{d}{dt}\int_0^L\frac{\rho(\ptt u)^2+\mu(\ptxx u)^2}{2}dx=&\int_0^L\Phi''(u)\pt u \ptt udx\\
\le&\frac{1}{2\rho}\int_0^L\Phi''(u)^2(\pt u)^2dx+\int_0^L\frac{\rho(\ptt u)^2}{2}dx\\
\le&c+\int_0^L\frac{\rho(\ptt u)^2+\mu(\ptxx u)^2}{2}dx.
\end{align*}
Thanks to the Gronwall Lemma 
\begin{equation*}
\int_0^L\frac{\rho(\ptt u(t,x))^2+\mu(\ptxx u(t,x))^2}{2}dx\le c(e^t-1),\qquad t\ge0.
\end{equation*}
As a consequence $u$ is an energy preserving solution of \eqref{eq:ulin} and then it must be the trivial one.
Eventually, \eqref{eq:lin1} concludes the proof.
\end{proof}

\section{{Non-uniqueness and lack of smoothness}}
\label{sec:uniqu}

This section is devoted to exploit some qualitative properties of \eqref{eq:el} through explicit analytical examples 
evidencing the lack of uniqueness and smoothness of solutions.
A key mechanism ruling these phenomena relies in the transition between the two configurations induced by the discontinuity affecting the forcing term $\Phi'$.
In particular, the first two examples show the lack of uniqueness while the last one and the numerical experiments enlighten  the occurrences of lack of smoothness.

\begin{example}\normalfont
\label{ex:2}
Let $\eps>0$ and set $\rho=\mu=1$. Consider the function
\begin{equation*}
\Phi_\eps(u)=\begin{cases}
\frac{2-\eps}{2}u^2,&\quad  \text{if $|u|\le 1,$}\\
\frac{2-\eps}{\eps}\left((1+\eps)\left(u-\frac{1}{2}\right)-\frac{u^2}{2}\right),&\quad  \text{if $1\le u\le 1+\eps,$}\\
\frac{\eps-2}{\eps}\left((1+\eps)\left(u+\frac{1}{2}\right)+\frac{u^2}{2}\right),&\quad  \text{if $-1-\eps\le u\le -1,$}\\
\frac{(2-\eps)(1+\eps)}{2},&\quad  \text{if $ |u|\ge 1+\eps.$}
\end{cases}
\end{equation*}
We have
\begin{equation*}
\Phi_\eps'(u)=\begin{cases}
(2-\eps)u,&\quad  \text{if $|u|\le 1,$}\\
\frac{2-\eps}{\eps}(1+\eps-u),&\quad  \text{if $1\le u\le 1+\eps,$}\\
\frac{\eps-2}{\eps}(1+\eps+u),&\quad  \text{if $-1-\eps\le u\le -1,$}\\
0,&\quad  \text{if $|u|\ge 1+\eps.$}
\end{cases}
\end{equation*}
The functions 
\begin{equation*}
u_\eps(t,x)=(1-\eps)\cos\left(\sqrt{2-\eps}\,t\right),\qquad v_\eps(t,x)=1+\eps
\end{equation*}
solve
\begin{align}
\label{eq:ex.2.1}
&\begin{cases}
\ptt u_\eps=-\pxxxx u_\eps-\Phi_\eps'(u_\eps),&\quad t>0,0<x<L,\\
\pxx u_\eps(t,0)=\pxx u_\eps(t,L)=0,&\quad t>0,\\
\pxxx u_\eps(t,0)=\pxxx u_\eps(t,L)=0,&\quad t>0,\\
u_\eps(0,x)=1-\eps,&\quad 0<x<L,\\
\pt u_\eps(0,x)=0,&\quad 0<x<L,
\end{cases}
\\&
\label{eq:ex.2.2}
\begin{cases}
\ptt v_\eps=-\pxxxx v_\eps-\Phi_\eps'(v_\eps),&\quad t>0,0<x<L,\\
\pxx v_\eps(t,0)=\pxx v_\eps(t,L)=0,&\quad t>0,\\
\pxxx v_\eps(t,0)=\pxxx v_\eps(t,L)=0,&\quad t>0,\\
v_\eps(0,x)=1+\eps,&\quad 0<x<L,\\
\pt v_\eps(0,x)=0,&\quad 0<x<L.
\end{cases}
\end{align}
As $\eps\to0$ we have
\begin{equation*}
u_\eps(t,x)\to u(t,x)=\cos\left(\sqrt{2}\,t\right),\qquad v_\eps(t,x)\to v(t,x)=1,
\end{equation*}
and $u$ and $v$ provides two different solutions of \eqref{eq:el} in correspondence of the initial data
\begin{equation*}
u_0(x)=1,\qquad u_1(x)=0.
\end{equation*}
The energies associated to \eqref{eq:ex.2.1} and \eqref{eq:ex.2.2} are 
\begin{align*}
E_\eps[u_\eps](t)&=\int_0^L\left(\frac{(\pt u_\eps(t,x))^2+(\pxx u_\eps(t,x))^2}{2}+ \Phi_\eps(u_\eps(t,x))\right)dx=\frac{(2-\eps)(1-\eps)^2}{2}L,\\
E_\eps[v_\eps](t)&=\int_0^L\left(\frac{(\pt v_\eps(t,x))^2+(\pxx v_\eps(t,x))^2}{2}+\Phi_\eps(v_\eps(t,x))\right)dx=\frac{(2-\eps)(1+\eps)}{2}L,
\end{align*}
respectively.
\end{example}

\begin{example}\normalfont
\label{ex:3}
For every $\eps>0$, the solutions $u_\eps$ and $v_\eps$ of the two following problems
\begin{align}
\label{eq:ex.3.1}
&\begin{cases}
\ptt u_\eps=-\pxxxx u_\eps-\Phi'(u_\eps),&\quad t>0,0<x<L,\\
\pxx u_\eps(t,0)=\pxx u_\eps(t,L)=0,&\quad t>0,\\
\pxxx u_\eps(t,0)=\pxxx u_\eps(t,L)=0,&\quad t>0,\\
u_\eps(0,x)=1+\eps,&\quad 0<x<L,\\
\pt u_\eps(0,x)=\eps,&\quad 0<x<L,
\end{cases}
\\&
\label{eq:ex.3.2}
\begin{cases}
\ptt v_\eps=-\pxxxx v_\eps-\Phi'(v_\eps),&\quad t>0,0<x<L,\\
\pxx v_\eps(t,0)=\pxx v_\eps(t,L)=0,&\quad t>0,\\
\pxxx v_\eps(t,0)=\pxxx v_\eps(t,L)=0,&\quad t>0,\\
v_\eps(0,x)=1-\eps,&\quad 0<x<L,\\
\pt v_\eps(0,x)=0,&\quad 0<x<L,
\end{cases}
\end{align}
are
\begin{equation*}
u_\eps(t,x)=\eps t+1+\eps,\qquad 
v_\eps(t,x)=(1-\eps)\cos(\sqrt{2}t).
\end{equation*}
We have
\begin{align*}
&\norm{u_\eps(0,\cdot)-v_\eps(0,\cdot)}_{L^2(0,L)}+\norm{\pt u_\eps(0,\cdot)-\pt v_\eps(0,\cdot)}_{L^2(0,L)}=3\eps\sqrt{L},\\
&\lim_{t\to\infty}u_\eps(t,x)=\infty,\qquad \limsup_{t\to\infty}v_\eps(t,x)=1-\eps.
\end{align*}
Moreover, as $\eps\to0$,
\begin{equation*}
u_\eps(t,x)\to 1,\qquad 
v_\eps(t,x)\to \cos(\sqrt{2}t).
\end{equation*}
The energies associated to \eqref{eq:ex.3.1} and \eqref{eq:ex.3.2} are 
\begin{align*}
E[u_\eps](t)&=\int_0^L\left(\frac{(\pt u_\eps(t,x))^2+(\pxx u_\eps(t,x))^2}{2}+ \Phi(u_\eps(t,x))\right)dx=\frac{\eps^2+2}{2}L,\\
E[v_\eps](t)&=\int_0^L\left(\frac{(\pt v_\eps(t,x))^2+(\pxx v_\eps(t,x))^2}{2}+\Phi(v_\eps(t,x))\right)dx=(1-\eps)^2L,
\end{align*}
respectively.
\end{example}

\begin{example}\normalfont
\label{ex:reg}
Consider the function
\begin{equation}
\label{eq:reg.1}
u(t,x)=
\begin{cases}
\sqrt{2}\sin(\sqrt{2}t),&\qquad \text{if $0\le t\le \frac{\pi}{4\sqrt{2}}$},\\
\sqrt{2} t+1-\frac{\pi}{4},&\qquad \text{if $t\ge \frac{\pi}{4\sqrt{2}}$}.
\end{cases}
\end{equation}
Clearly, $u$ solves the problem
\begin{equation*}
\begin{cases}
\ptt u=-\pxxxx u-\Phi'(u),&\quad t>0,x\in(0,L),\\
\pxx u(t,0)=\pxx u(t,L)=0,&\quad t>0,\\
\pxxx u(t,0)=\pxxx u(t,L)=0,&\quad t>0,\\
u(0,x)=0,&\quad x\in(0,L),\\
\pt u(0,x)=2,&\quad x\in(0,L),
\end{cases}
\end{equation*}
but 
\begin{equation*}
u\in C^1([0,\infty)\times[0,L])\setminus C^2([0,\infty)\times[0,L]).
\end{equation*}
Indeed
\begin{align*}
\lim_{t \to \frac{\pi}{4\sqrt{2}}^- }u\left(t,x\right)=1,\qquad & \lim_{t \to \frac{\pi}{4\sqrt{2}}^+}u\left(t,x\right)=1,\\
\lim_{t \to \frac{\pi}{4\sqrt{2}}^-}\pt u\left(t,x\right)=\sqrt{2},\qquad & \lim_{t \to \frac{\pi}{4\sqrt{2}}^+} \pt u\left(t,x\right)=\sqrt{2},\\
\lim_{t \to \frac{\pi}{4\sqrt{2}}^-}\ptt u\left(t,x\right)=-2,\qquad & \lim_{t \to\frac{\pi}{4\sqrt{2}}^+ } \ptt u\left(t,x\right)=0.
\end{align*}
The energy associated to \eqref{eq:reg.1} is
\begin{equation*}
E[u](t)=\int_0^L\left(\frac{(\pt u(t,x))^2+(\px u(t,x))^2}{2}+ \Phi(u(t,x))\right)dx=2L,
\end{equation*}
for every $t\ge0$.
\end{example}

\end{document}